\documentclass[11pt]{article}
\usepackage[top=0.8in, bottom=1in, left=1.2in, right=1.2in]{geometry}

\usepackage[utf8]{inputenc}
\usepackage{amsthm,amsmath}
\usepackage{amssymb, mathrsfs, color}
\usepackage{enumerate}
\usepackage{verbatim}
\usepackage{tikz-cd}
\usepackage{enumitem}
\usepackage{indentfirst}
\usepackage{hyperref}
\hypersetup{
    colorlinks=true,
    linkcolor=blue,
    citecolor=brown,
    filecolor=magenta,
    urlcolor=blue,
}
\usepackage{titlesec}
\usepackage{fancyhdr}
\usepackage{mathtools}
\usepackage{CJKutf8}
\usepackage{enumitem}

\usetikzlibrary{decorations.pathmorphing}


\newtheorem{thm}{Theorem}

\newtheorem{lem}[thm]{Lemma}
\newtheorem*{lem*}{Lemma}
\newtheorem{prop}[thm]{Proposition}

\providecommand{\customgenericname}{}
\newcommand{\newcustomtheorem}[2]{%
  \newenvironment{#1}[1]
  {%
   \renewcommand\customgenericname{#2}%
   \renewcommand\theinnercustomgeneric{##1}%
   \innercustomgeneric
  }
  {\endinnercustomgeneric}
}

\newcustomtheorem{thm*}{Theorem}
\newcustomtheorem{prop*}{Proposition}
\newcustomtheorem{cor*}{Corollary}

\newtheorem*{thm**}{Theorem}

\theoremstyle{definition}
\newcustomtheorem{defn*}{Definition}

\newtheorem{remark}[thm]{Remark}

\newenvironment{remark*}[2][Remark]{\begin{trivlist}
\item[\hskip \labelsep {\bfseries #1}\hskip \labelsep {\bfseries #2.}]}{\end{trivlist}}

\newcommand{\B}{\mathbb{B}}
\newcommand{\C}{\mathbb{C}}
\newcommand{\Co}{\mathscr{C}}

\newcommand{\R}{\mathbb{R}}

\newcommand{\V}{\mathcal{V}}

\newcommand{\Z}{\mathbb{Z}}

\newcommand{\eps}{\varepsilon}

\newcommand{\Span}{\operatorname{Span}}

\newcommand{\loc}{\mathrm{loc}}
\newcommand{\supp}{\operatorname{supp}}
\newcommand{\Coorvec}[1]{\frac\partial{\partial#1}}

\begin{document}
\author{Yao, Liding\\\small University of Wisconsin-Madison}
\title{A Counterexample to $C^k$-regularity for the Newlander-Nirenberg Theorem}
\date{}
\maketitle
\begin{abstract}
We give an example of $C^k$-integrable almost complex structure that does not admit a corresponding $C^{k+1}$-complex coordinate system.
\end{abstract}

The celebrated Newlander-Nirenberg theorem \cite{NN} states that given an integrable almost complex structure, it is locally induced by some complex coordinate system.

Malgrange \cite{Malgrange} proved the existence of such complex coordinate chart when the almost complex structure is not smooth, and he obtained the following sharp H\"older regularity for this chart:
\begin{thm}[Sharp Newlander-Nirenberg]\label{SharpNN}
Let $k\in\Z_+$ and $0<\alpha<1$, let $M^{2n}$ be a $C^{k+1,\alpha}$-manifold endowed with a $C^{k,\alpha}$-almost complex structure $J:TM\to TM$. If $J$ is integrable, then for any $p\in M$ there is a $C^{k+1,\alpha}$-complex coordinate chart $(w^1,\dots,w^n):U\to\C^n$ near $p$ such that $J\Coorvec{w^j}=i\Coorvec{w^j}$ for $j=1,\dots,n$.
\end{thm}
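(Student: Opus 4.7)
The plan is to localize the problem, reduce it to a nonlinear Beltrami-type first-order system, and solve that system by a Picard iteration that exploits the one-derivative gain of the $\bar\partial$-homotopy operator on a small ball.

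After choosing a smooth chart centered at $p$, I would identify a neighborhood of $p$ with an open set $U\subset\C^n$ sending $p\mapsto 0$, and then apply a constant linear change of coordinates so that $J(0)=J_0$, the standard complex structure on $\C^n$. Shrinking to a small ball $B_r$, the $J$-antiholomorphic tangent bundle is spanned by
\[
L_j=\frac{\partial}{\partial\bar z^j}-\sum_{\ell=1}^n B_j^\ell(z,\bar z)\frac{\partial}{\partial z^\ell},\qquad j=1,\dots,n,
\]
where $B\in C^{k,\alpha}(B_r;\operatorname{Mat}_{n\times n}(\C))$ satisfies $B(0)=0$. The Newlander-Nirenberg integrability condition $[L_i,L_j]\in\Span(L_1,\dots,L_n)$ is equivalent to the vanishing of $[L_i,L_j]$ itself, which translates into a quadratic PDE of the schematic form $\bar\partial B = Q(B,\partial B)$; this furnishes the compatibility needed below.

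Finding $J$-holomorphic coordinates $w=(w^1,\dots,w^n)$ is equivalent to solving $L_jw^\ell=0$ for all $j,\ell$, that is, the system
\[
\bar\partial w=B(z,\bar z)\,\partial w,\qquad w(0)=0,\quad \partial w(0)=I.
\]
I would pick a homotopy operator $T$ for the $\bar\partial$-complex on $B_r$ (for instance via Bochner-Martinelli-Koppelman or Henkin-type kernels) that enjoys the sharp H\"older bound $\|Tf\|_{C^{k+1,\alpha}(B_r)}\leq C\|f\|_{C^{k,\alpha}(B_r)}$. Starting from $w_0(z)=z$, I would iterate $w_{m+1}=z+T(B\cdot\partial w_m)$. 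The key analytic fact is the one-derivative gain of $T$, so the iteration stays in $C^{k+1,\alpha}$. For $r$ small enough (so that $\|B\|$ is small), the map becomes a contraction on a small $C^{k+1,\alpha}$-neighborhood of $z\mapsto z$, and its fixed point is a $J$-holomorphic map with $\partial w(0)=I$, hence a local $C^{k+1,\alpha}$-diffeomorphism satisfying $J(\partial/\partial w^j)=i\,\partial/\partial w^j$.

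The main obstacle, and the precise place where the integrability hypothesis is used, is that for $n\geq 2$ the system $\bar\partial w=B\,\partial w$ is overdetermined ($n^2$ equations for $n$ unknowns), so that applying $T$ yields an honest solution of the linearized problem only when its input is $\bar\partial$-closed as a $\C^n$-valued $(0,1)$-form. Checking this closedness at each iteration step reduces to the nonlinear identity on $B$ coming from $[L_i,L_j]=0$, i.e., to the Nijenhuis integrability hypothesis. Once this compatibility is established, the regularity assertion follows from the sharpness of $T$, which is responsible for the gain of exactly one H\"older derivative from $J$ to $w$.
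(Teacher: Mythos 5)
A preliminary remark: the paper does not prove this statement. Theorem \ref{SharpNN} is quoted from Malgrange \cite{Malgrange} as background; the paper's own contribution is the counterexample showing the statement fails for $\alpha\in\{0,1\}$. So your proposal can only be judged on its own merits, not against an argument in the text.

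On the merits, there is a genuine gap, and it sits exactly at the point you flag but then dismiss. For $n\ge 2$ the system $\bar\partial w=B\,\partial w$ is overdetermined, and the homotopy formula gives $\bar\partial T(f)=f-T(\bar\partial f)$ only up to the error $T(\bar\partial f)$. Your claim that ``checking this closedness at each iteration step reduces to the nonlinear identity on $B$ coming from $[L_i,L_j]=0$'' is not correct: $\bar\partial(B\,\partial w_m)$ involves, schematically, $(\bar\partial B)\wedge\partial w_m - B\,\partial(\bar\partial w_m)$, and substituting the integrability identity $\bar\partial B=Q(B,\partial B)$ makes this vanish only if $\bar\partial w_m=B\,\partial w_m$ already holds exactly --- which is what the iteration is trying to produce. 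So at every step the input to $T$ fails to be $\bar\partial$-closed by an amount you do not control, and the fixed point of $w=z+T(B\,\partial w)$ satisfies only $\bar\partial w=B\,\partial w-T\bar\partial(B\,\partial w)$; killing that residual term requires a separate, substantive argument. This circularity is precisely why the known proofs are structured differently: Malgrange reduces to the real-analytic case via a nonlinear elliptic system, Webster runs a Nash--Moser-type rapid iteration in which the error from non-closedness is quadratically small at each stage, and Nijenhuis--Woolf induct on dimension. A secondary (fixable) issue: the sharp bound $\|Tf\|_{C^{k+1,\alpha}}\le C\|f\|_{C^{k,\alpha}}$ for Henkin/BMK-type operators holds up to the boundary only with a loss (typically a gain of $1/2$ derivative in the isotropic H\"older scale); you would need to work with interior estimates on shrinking balls, where the Bochner--Martinelli convolution does gain a full derivative. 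For $n=1$ your scheme is essentially sound --- the system is determined, there is no integrability condition, and the Cauchy--Green operator gives the sharp gain --- which is, not coincidentally, the setting the paper reduces to for its counterexample.
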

There are several equivalent characterizations for integrability. One of which is the vanishing of the \textbf{Nijenhuis tensor} $N_J(X,Y):=[JX, JY ]-J[JX,Y ]-J[X,JY ]-[X,Y ]$.

Our main theorem is to show that this is not true when $\alpha\in\{0,1\}$:
\begin{thm}\label{THM}
Let $k,n\in\Z_+$. There is a $C^k$-integrable almost complex structure $J$ on $\R^{2n}$, such that there is no $C^{k,1}$-complex coordinate chart $w:U\subset\R^{2n}\to\C^n$  near $0$ satisfying $J\Coorvec{w^j}=i\Coorvec{w^j}$ for $j=1,\dots,n$.
\end{thm}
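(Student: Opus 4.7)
The plan is to reduce to $n = 1$ and then produce an explicit Beltrami coefficient whose chart is obstructed by the endpoint failure of the Cauchy transform. First I would reduce to $n = 1$: given a counterexample $\mu$ on $\R^2$, form the product structure $J := J_\mu \oplus J_0$ on $\R^{2n} \cong \R^2 \times \R^{2(n-1)}$, where $J_0$ denotes the standard complex structure on $\R^{2(n-1)}$. The Nijenhuis tensor splits additively under products, hence $J$ is $C^k$-integrable. If $w = (w^1, \dots, w^n)$ were a $C^{k,1}$-chart for $J$ near $0$, I may (after composing with a constant $\GL(n,\C)$-matrix) assume the complex differential of $w$ at $0$ is the identity. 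Then $z^1 \mapsto w^1(z^1, 0, \dots, 0)$ is a $C^{k,1}$ local diffeomorphism of $\R^2$ satisfying the Beltrami equation $\bar\partial w^1 = \mu \cdot \partial w^1$, contradicting the 2D construction.

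In real dimension $2$, the Nijenhuis tensor of any almost complex structure vanishes identically, so the chart problem reduces to finding $\mu \in C^k(\R^2;\C)$ with $\|\mu\|_\infty < 1$ for which the Beltrami equation $\bar\partial w = \mu \cdot \partial w$ admits no $C^{k,1}$ local diffeomorphism solution near $0$. Writing $w = z + v$ and letting $T$ denote the Cauchy transform $Tg(z) := -\tfrac{1}{\pi}\int_\C \frac{g(\zeta)}{\zeta - z}\,dA(\zeta)$, which inverts $\bar\partial$, the equation becomes $v = T\mu + T(\mu \cdot \partial v)$. I would take $\mu = \eps f$ for small $\eps > 0$ and $f \in C_c^k(\R^2;\C)$ chosen so that $Tf \in C^k \setminus C^{k,1}$ near $0$; the existence of such $f$ is a classical endpoint failure of $T$ as a map $C^k \to C^{k,1}$. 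A concrete candidate is $f(z) = \chi(z)\,\bar z^a z^b (\log |z|^{-1})^c$ with a smooth cutoff $\chi$ and nonnegative integer exponents $a, b, c$ tuned so that $f \in C^k$ while $Tf$ carries an unavoidable log-Lipschitz (not Lipschitz) singularity in its $k$-th derivative at the origin.

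For $\eps$ small, the Neumann iteration converges (in $C^{k,\alpha}$ for every $\alpha \in (0,1)$, by Theorem~\ref{SharpNN} applied to smooth approximations) to a solution $v$, and I would then argue that the nonlinear correction $v - \eps Tf = T(\mu \partial v)$ is strictly more regular than $\eps Tf$ near the origin, so that the log-singularity in the $k$-th derivative of $\eps Tf$ survives in $v$ and hence in $w = z + v$. Combined with the uniqueness of holomorphic coordinates on a Riemann surface --- any other $C^{k,1}$-chart $\tilde w$ equals $\Phi \circ w$ for some holomorphic $\Phi$ and therefore shares the regularity class of $w$ --- this rules out all $C^{k,1}$-charts. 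The main obstacle is making the regularity comparison between $\eps Tf$ and the correction $T(\mu\partial v)$ rigorous: because $v$ is only a priori in $C^{k,\alpha}$, one cannot naively apply Leibniz at level $k$, and a more refined asymptotic analysis of $Tf$ at the origin (isolating its leading singular term) together with its stability under the fixed-point iteration is required to conclude that the log-singularity is genuine and cannot be absorbed by a holomorphic reparameterization.
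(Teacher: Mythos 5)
Your reduction to $n=1$ is essentially the paper's, and your choice of a $C^k$ Beltrami coefficient built from a logarithmic singularity is in the same spirit as the paper's coefficient $a(z)=\frac1{100}\partial_z\big(\bar z^{k+1}(-\log|z|)^{1/2}\big)$. But the central step of your argument --- that the nonlinear correction $T(\mu\,\partial v)$ is ``strictly more regular'' than $\eps Tf$, so the log-singularity in the $k$-th derivative of the first Neumann iterate survives in $w$ --- is a genuine gap, and it is exactly the point you flag as the main obstacle. A priori $v\in C^{k,\alpha}$ only for $\alpha<1$, so $\mu\,\partial v\in C^{k-1,\alpha}$ and $T(\mu\,\partial v)\in C^{k,\alpha}$; since $C^{k,\alpha}\not\subset C^{k,1}$, the correction lives in the \emph{same} regularity class as the singular term you are trying to preserve, and its $k$-th derivative could in principle carry an oscillation of order $|z|^\alpha$ at the origin, far larger than the $(\log|z|)^{c}$ singularity you want to detect. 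No choice of small $\eps$ fixes this, because the loss is structural, not quantitative. For a generic $f\in C^k_c$ with $f(0)\neq0$ there is no mechanism forcing the comparison to go your way.

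The paper closes this gap with two ideas absent from your proposal. First, it does not analyze $w$ directly but $f:=\log\partial_zw$, which solves $f_{\bar z}+\bar af_z=-\bar a_z$; the obstruction is pushed down one derivative, so the final contradiction is between a term in $C^k$ and a term not in $C^{k-1,1}$ --- a separation across a \emph{full} derivative gap ($C^{k,1-\eps}\subset C^k\subset C^{k-1,1}$), rather than between two objects both sitting in $\bigcap_{\alpha<1}C^{k,\alpha}$. Second, and crucially, the coefficient is built to vanish to order $k+1$ (up to the log factor) at the origin and to be smooth away from it, so that $za\in C^{k+1}$ and $z^{-1}a\in C^{k-1}$. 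This lets one bootstrap: setting $g=\chi f$ and $h=\bar zg$, the equation for $h$ has a $C^k$ right-hand side, Schauder gives $h\in C^{k,1-\eps}$, and then $\bar ag_z=(\overline{z^{-1}a})\cdot(\bar zg_z)$ lands in $C^{k-1,1-\eps}$, so $\partial_{\bar z}^{-1}(\bar ag_z)\in C^k$ --- genuinely more regular than the source term $\partial_{\bar z}^{-1}(\chi\bar a_z)\notin C^{k-1,1}$. It is precisely the vanishing of the coefficient at the origin (your $\mu$ has no such vanishing) that supplies the extra regularity your comparison needs; without it, the refined asymptotic analysis you defer to would have to overcome a real obstruction, not merely a technical one.
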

For convenience we use the viewpoint of eigenbundle of $J$: Set $\V_n=\coprod_p\{v\in \C T_p\R^{2n}:J_pv=iv\}$. So $J\Coorvec{w^j}=i\Coorvec{w^j}$ for all $j$ iff $d\bar w^1,\dots,d\bar w^n$ spans $\V_n^\bot|_U\le\C T^*\R^{2n}|_U$. And $J$ is integrable if and only if $X,Y\in\Gamma(\V_n)\Rightarrow[X,Y]\in\Gamma(\V_n)$ for all complex vector fields $X,Y$. See \cite{Involutive} Chapter 1 for details.

\medskip
First we can restrict our focus to the 1-dimensional case:
\begin{proof}[Proof of 1-dim $\Rightarrow$ n-dim]Suppose $J_1$ is a $C^k$-almost complex structure on $\C^1_{z^1}$ (not compatible with the standard complex structure), such that near $0$, there is no $C^{k,1}$-complex coordinate $\varphi$ satisfying $\V_1^\bot=\Span d\bar \varphi$ in the domain. Here $\V_1^\bot$ is the dual eigenbundle of $J_1$. 


\medskip
Denote $\theta=\theta(z^1)$ as a $C^k$ 1-form on $\C^1_{z^1}$ that spans $\V_1^\bot$. 

Consider $\R^{2n}\simeq\C^n_{(z^1,\dots,z^n)}$. We identify $\theta$ as the $C^k$ 1-form on $\R^{2n}$.
Take an $n$-dim almost complex structure on $\R^{2n}$ such that the dual of eigenbundle $\V_n^\bot$ is spanned by $\theta,d\bar z^2,\dots,d\bar z^n$. 

In other words, $\V_n$ is the ``tensor'' of $\V_1$ with the standard complex structure of $\C^{n-1}_{(z^2,\dots,z^n)}$.

If $w=(w^1,\dots,w^n)$ is a corresponding $C^{k,1}$-complex chart for $\V_n$ near $0$, then there is a $1\le j_0\le n$ such that $d\bar w^{j_0}\not\equiv0\pmod{d\bar z^2,\dots,d\bar z^n}$ near $0$. In other words, we have linear combinations $d\bar w^{j_0}=\lambda_1\theta+\lambda_2d\bar z^2+\dots+\lambda_nd\bar z^n$ for some non-vanishing function $\lambda_1(z^1,\dots,z^n)$ near $z=0$.

Therefore $w^{j_0}(\cdot,0^{n-1})$ is a complex coordinate chart defined near $z^1=0\in\R^2$ whose differential spans $\V_n^\bot|_{\R^2_{z^1}}\cong\V_1^\bot$ near $0$. By our assumption on $\V_1$, we have $w^{j_0}(\cdot,0)\notin C^{k,1}$. So $w^{j_0}\notin C^{k,1}$, which means $w\notin C^{k,1}$.
\end{proof}

Now we focus on the one-dimensional case. Note that a 1-dim structure is automatically integrable.

Fix $k\ge1$. Define an almost complex structure by setting its eigenbundle $\V_1\le\C T\R^2$ equals to the span of $\Coorvec z+a(z)\Coorvec{\bar z}$, where $a\in C^k(\R^2;\C)$ has compact support that satisfies the following:

\medskip
\begin{enumerate}[nolistsep,label=(\roman*)]
    \item\label{1} $a\in C^\infty_\loc(\R^2\backslash\{0\};\C)$;
    \item\label{2} $\partial_z^{-1}\partial_{\bar z}a\notin C^{k-1,1}$ near 0;
    \item\label{3} $za\in C^{k+1}(\R^2;\C)$ and $z^{-1}a\in C^{k-1}(\R^2;\C)$ (which implies $a(z)=o(|z|)$ as $z\to0$);
    \item\label{4} $\supp a\subset\B^2$;
    \item\label{5} $\|a\|_{C^0}<\delta_0$ for some small enough $\delta_0>0$ (take $\delta_0=10^{-1}$ will be ok).
\end{enumerate}

\medskip
Here we take $\partial_z^{-1}$ to be the \textbf{conjugated Cauchy-Green operator} on the unit disk\footnote{Throughout the paper, $\B^2=B^2(0,1)$ refers to the unit disk in $\C^1$.}: $$\displaystyle \partial_z^{-1}\phi(z)=\partial_{z,\B^2}^{-1}\phi(z):=\frac1\pi\int_{\B^2}\frac{\phi(\xi+i\eta)d\xi d\eta}{\bar z-\xi+i\eta}.$$

We use notation $\partial_z^{-1}$ because it is an right inverse of $\partial_z$. And $\partial_z^{-1}:C^{m,\beta}(\B^2;\C)\to C^{m+1,\beta}(\B^2;\C)$ is bounded linear for all $m\in\Z_{\ge0}$, $0<\beta<1$. See \cite{Vekua} theorem 1.32 in section 8.1 (page 56), or \cite{Shaw} lemma 2.3.4 for example.

\medskip
We can take  $\supp a\subset\B^2$ such that when $|z|<\frac12$, 
\begin{equation}\label{a}
    \textstyle a(z):=\frac1{100}\bar z^{k+1}\partial_z\big((-\log|z|)^{\frac12}\big)=\frac1{100}\partial_z\big(\bar z^{k+1}(-\log|z|)^{\frac12}\big).
\end{equation}

Note that for this $a$ we have $a(z)=O\big(|z|^k(-\log|z|)^{-\frac12}\big)=o(|z|^k)$.

\begin{remark}
Roughly speaking, Property \ref{1} $\operatorname{Singsupp}a=\{0\}$ says that the regularity of $a(z)$ corresponds to the vanishing  order of $a$ at $0$. To some degree, by multiplying with $a(z)$, a function gains some regularity at the origin.
\end{remark}

 We check Property \ref{2} that $\partial_z^{-1}\partial_{\bar z}a\notin C^{k-1,1}$ here.
\begin{lem}\label{lem}
Let $a(z)$ be given by \eqref{a}, and let $\chi\in C_c^\infty(\frac12\B^2)$ satisfies $\chi\equiv1$ in a neighborhood of 0. Then $\partial_z^{-1}(\chi a_{\bar z})\notin C^{k-1,1}$ near $z=0$.
\end{lem}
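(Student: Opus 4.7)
The strategy is to peel off smooth corrections from $\partial_z^{-1}(\chi a_{\bar z})$ until the obstruction is localized in a single explicit function, then analyze its top antiholomorphic derivative by hand.

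Set $g(z):=\bar z^{k+1}(-\log|z|)^{1/2}$ so that \eqref{a} reads $a=\tfrac{1}{100}\partial_z g$ on $\tfrac12\B^2$, and let $h:=\partial_{\bar z}g$. Since $g\in C^k$ near $0$, mixed partials commute distributionally, giving $a_{\bar z}=\tfrac{1}{100}\partial_z h$ on the support of $\chi$. The product rule together with $\partial_z\partial_z^{-1}=\id$ then yields
\[
100\,\partial_z^{-1}(\chi a_{\bar z})=\partial_z^{-1}\partial_z(\chi h)-\partial_z^{-1}\bigl((\partial_z\chi)\,h\bigr).
\]
The identity $\partial_z^{-1}\partial_z\phi=\phi$ holds for any $\phi\in C_c^0(\B^2)$: the difference is an antiholomorphic distribution on $\C$ (its $\partial_z$ vanishes by $\partial_z\partial_z^{-1}=\id$), and the integral representation ensures decay at $\infty$, hence it is zero by Liouville. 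For the second term, $\partial_z\chi$ vanishes on a neighborhood $U$ of $0$, so $(\partial_z\chi)h$ is $C^\infty$ with support disjoint from $U$; since the kernel $1/(\pi\overline{z-\zeta})$ is smooth in $z$ away from its pole, $\partial_z^{-1}((\partial_z\chi)h)\in C^\infty(U)$. With $\chi\equiv1$ near $0$, I conclude
\[
\partial_z^{-1}(\chi a_{\bar z})=\tfrac{1}{100}h+F,\quad F\in C^\infty\text{ near }0.
\]

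It remains to show $h\notin C^{k-1,1}$ near $0$. Set $u:=(-\log|z|)^{1/2}$, so $g=\bar z^{k+1}u$ and $\partial_{\bar z}^{k-1}h=\partial_{\bar z}^k g$. Leibniz gives
\[
\partial_{\bar z}^k g=\sum_{j=0}^{k}\binom{k}{j}\frac{(k+1)!}{(j+1)!}\,\bar z^{j+1}\,\partial_{\bar z}^j u.
\]
The $j=0$ contribution is $(k+1)!\,\bar z\,(-\log|z|)^{1/2}$; dividing by $|z|$ yields $(k+1)!(-\log|z|)^{1/2}\to\infty$, so this term is already non-Lipschitz at $0$. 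For $j\ge1$, a direct induction (using $\partial_{\bar z}|z|=z/(2|z|)$) gives the shape $\partial_{\bar z}^j u=\bar z^{-j}Q_j(|z|)$ with $Q_j(r)$ and $rQ_j'(r)$ both $O((-\log r)^{-1/2})$; then $\bar z^{j+1}\partial_{\bar z}^j u=\bar z\,Q_j(|z|)$ has gradient bounded by $|Q_j(|z|)|+|z||Q_j'(|z|)|=o(1)$, hence is Lipschitz near $0$. So only the $j=0$ term contributes singular behavior, $\partial_{\bar z}^{k-1}h$ fails to be Lipschitz at $0$, and $h\notin C^{k-1,1}$.

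The principal subtlety I foresee is the Liouville step for merely continuous $\phi=\chi h$: it requires that an antiholomorphic distribution on $\C$ which decays at infinity vanishes identically, which follows from elliptic regularity (antiholomorphic distributions are smooth) combined with classical Liouville. If that argument turns out to be delicate, I would instead mollify $\chi h$ by $\rho_\eps\in C_c^\infty$, apply the textbook identity $\partial_z^{-1}\partial_z=\id$ to each $(\chi h)*\rho_\eps$, and pass to the limit using $L^1$-convergence of $\partial_z[(\chi h)*\rho_\eps]\to\partial_z(\chi h)$ together with the boundedness of $\partial_z^{-1}:L^1_c(\B^2)\to L^q(\B^2)$ for $q<2$.
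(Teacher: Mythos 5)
Your proposal is correct and follows essentially the same route as the paper's proof: isolate $b_{\bar z}=\partial_{\bar z}\big(\bar z^{k+1}(-\log|z|)^{1/2}\big)$ as the singular part of $\partial_z^{-1}(\chi a_{\bar z})$ modulo terms smooth near $0$ (an antiholomorphic-difference term plus a term supported away from the origin), then rule out $C^{k-1,1}$ by the same Leibniz computation, in which only the $j=0$ term survives. The only cosmetic differences are that the paper settles for ``$\partial_z^{-1}\partial_z(\chi b_{\bar z})-\chi b_{\bar z}$ is antiholomorphic, hence smooth'' rather than your exact Liouville identity, and that it detects the failure of $C^{k-1,1}$ via unboundedness of the $k$-th derivative $\partial_{\bar z}^{k+1}b$ rather than non-Lipschitzness of the $(k-1)$-th derivative; both variants are valid.
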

\begin{proof}Denote $b(z):=\bar z^{k+1}(-\log|z|)^\frac12$, so $b\in C^\infty(\frac12\B^2\backslash\{0\};\C)$, $\chi a=\frac1{100}\chi b_z$ and $\chi a_{\bar z}=\frac1{100}\chi b_{z\bar z}$. 

First we show that $\partial_z^{-1}(\chi b_{z\bar z})-b_{\bar z}\in C^\infty(\frac12\B^2;\C)$. 
We write $$\partial_z^{-1}(\chi b_{z\bar z})- b_{\bar z}=\partial_z^{-1}\partial_z(\chi b_{\bar z})-\chi b_{\bar z}-(1-\chi)b_{\bar z}-\partial_z^{-1}(\chi_zb_{\bar z}).$$

Since $\partial_z\partial_z^{-1}\partial_z(\chi b_{\bar z})=\partial_z(\chi b_{\bar z})$, we know $\partial_z^{-1}\partial_z(\chi b_{\bar z})-\chi b_{\bar z}$ is anti-holomorphic. By Cauchy integral formula we get $\partial_z^{-1}\partial_z(\chi b_{\bar z})-\chi b_{\bar z}\in C^\infty$.

By assumption $0\notin\supp\chi_z$, $0\notin\supp(1-\chi)$ and $b\in C^\infty(\frac12\B^2\backslash\{0\};\C)$, we know $\chi_zb_{\bar z}\in C_c^\infty$ and $(1-\chi)b_{\bar z}\in C^\infty(\frac12\B^2;\C)$. 

Note that when acting on functions supported in the unit disk, $\partial_z^{-1}=\frac1{\pi\bar z}\ast(\cdot)$ is a convolution operator with kernel $\frac1{\pi\bar z}\in L^1$, so $\partial_z^{-1}(\chi_zb_{\bar z})=\frac1{\pi\bar z}\ast(\chi_zb_{\bar z})\in C^\infty$.

It remains to show $b_{\bar z}\notin C^{k-1,1}$ near $0$. Indeed one has $$\partial_{\bar z}^{k+1}(\bar z^{k+1}(-\log|z|)^\frac12)=(k+1)!(-\log|z|)^{\frac12}+O(1)\quad,\text{ as }z\to0,$$ because by Leibniz rule
\begin{align*}
    &\textstyle\partial_{\bar z}^k\partial_{\bar z}\big(\bar z^{k+1}(-\log|z|)^{\frac12}\big)=\sum_{j=0}^{k+1}{k+1\choose j}\partial_{\bar z}^{k+1-j}(z^{k+1})\cdot\partial_{\bar z}^j(-\log|z|)^{\frac12}\\
    =&\textstyle(k+1)!(-\log|z|)^{\frac12}+\sum_{j=1}^{k+1}O(z^j)O\big(z^{-j}(-\log|z|)^{-\frac12}\big)=(k+1)!(-\log|z|)^{\frac12}+O\big((-\log|z|)^{-\frac12}\big).
\end{align*}
\end{proof}
\medskip Now assume $w:\tilde U\subset\R^2\to\C$ is a 1-dim $C^1$-complex coordinate chart defined near $0$ that represents $\V_1$, then $\Span d\bar w=\Span (d\bar z-adz)|_{\tilde U}=\V_1^\bot|_{\tilde U}$. So $d\bar w=\bar w_{\bar z}d\bar z+\bar w_zdz=\bar w_{\bar z}(d\bar z-adz)$, that is,
$$\displaystyle\frac{\partial w}{\partial\bar z}(z)+\bar a(z)\frac{\partial w}{\partial z}(z)=0,\qquad z\in {\tilde U}.$$

\begin{remark}
It is worth noticing that $\partial_w\neq \partial_z+a\partial_{\bar z}$. Indeed $\partial_w$ is only a scalar multiple of $\partial_z+a\partial_{\bar z}$.
\end{remark}

Note that $w_z(0)\neq0$ because $(d\bar z-adz)|_0=d\bar z|_0\in\Span d\bar w|_0$. So by multiplying $w_{ z}(0)^{-1}$, we can assume $w_{z}(0)=1$ without loss of generality.
Then $f:=\log\partial_zw$ is a well-defined function in a smaller neighborhood $U\subset\tilde U$ of $0$, which solves 
\begin{equation}\label{MainEQN}
    \frac{\partial f}{\partial\bar z}(z)+\overline{a(z)}\frac{\partial f}{\partial z}(z)=-\frac{\partial\bar a}{\partial z}(z)\quad\Big(=-\overline{\frac{\partial a}{\partial\bar z}(z)} \Big),\qquad z\in U.
\end{equation}

Property \ref{5} indicates that the operator $\partial_{\bar z}+\bar a\partial_z$ is a first order elliptic operator. Therefore we can consider a second order divergence form elliptic operator $$L:=\partial_z(\partial_{\bar z}+\bar a\partial_z)$$ whose coefficients are $C^k$ globally and are $C^\infty$ outside the origin.

By the classical Schauder's estimate (see \cite{Taylor} Theorem 4.2, or \cite{GT} Chapter 6 \& 8), we have the following:
\begin{lem}[Schauder's interior estimate]\label{Schauder} Assume $u,\psi\in C^{0,1}(\B^2;\C)$ satisfy $Lu=\psi_z$. Let $U\subset\R^2$ be a neighborhood of $0$. The following hold:
\begin{enumerate}[nolistsep,label=(\alph*)]
    \item\label{S1}If $\psi\in C^\infty_\loc(U\backslash\{0\};\C)$, then $u\in C^\infty_\loc(U\backslash\{0\};\C)$.
    \item\label{S2}If $\psi\in C^{k-1,1}(\R^2;\C)$, then $u\in C^{k,1-\eps}_\loc(\R^2;\C)$ for all $0<\eps<1$.
\end{enumerate}
\end{lem}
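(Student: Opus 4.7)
The plan is to derive both parts from classical interior elliptic regularity applied to an equivalent first-order equation. From $Lu = \partial_z[(\partial_{\bar z}+\bar a\partial_z)u] = \psi_z$ on $\B^2$, the distribution $F := (\partial_{\bar z}+\bar a\partial_z)u - \psi$ satisfies $\partial_z F = 0$. Weyl's lemma for the Cauchy--Riemann operator then yields a $C^\infty$ anti-holomorphic function $h$ on $\B^2$ with $F \equiv h$, and hence
$$(\partial_{\bar z}+\bar a\partial_z)u \;=\; \psi + h.$$
Property \ref{5} with $\delta_0 = 10^{-1}$ gives $\|a\|_{C^0} < 1$, so the conjugated Beltrami operator $\partial_{\bar z}+\bar a\partial_z$ is uniformly elliptic on $\B^2$.

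For \ref{S1}, I would localize to a relatively compact open set $V \Subset U \setminus \{0\}$. Property \ref{1} gives $\bar a \in C^\infty(V)$; the hypothesis gives $\psi \in C^\infty(V)$; and $h$ is automatically $C^\infty$. Classical interior regularity for the first-order elliptic operator $\partial_{\bar z}+\bar a\partial_z$ with smooth coefficients and smooth right-hand side then yields $u \in C^\infty(V)$, and exhausting $U \setminus \{0\}$ by such $V$ completes the proof.

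For \ref{S2}, I would fix an arbitrary $\eps \in (0,1)$ and use the H\"older embeddings $C^k \hookrightarrow C^{k-1,1-\eps}$ and $C^{k-1,1} \hookrightarrow C^{k-1,1-\eps}$ to place both $\bar a$ and $\psi + h$ in $C^{k-1,1-\eps}_\loc$. The classical Schauder interior estimate for second-order divergence-form elliptic equations (Gilbarg--Trudinger Chapters 6 and 8, as cited) with H\"older exponent $\beta = 1-\eps$ then promotes $u$ to $C^{k,1-\eps}_\loc(\R^2)$; since $\eps \in (0,1)$ is arbitrary, this is the claim.

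The only real obstacle is the endpoint failure of Schauder at $\beta = 1$: one cannot take $\eps = 0$ above, because $\partial_z^{-1}$ is \emph{not} bounded $C^{m,1} \to C^{m+1,1}$. This logarithmic loss between $C^{k,1-\eps}$ and $C^{k,1}$ is precisely the gap that the $(-\log|z|)^{1/2}$ factor built into $a$ in \eqref{a} is designed to exploit later; the present lemma is merely the positive half of that dichotomy.
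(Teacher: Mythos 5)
Your argument is correct, and it is worth noting that the paper itself does not prove this lemma at all: it simply cites the classical Schauder theory for second-order divergence-form operators (Taylor; Gilbarg--Trudinger Ch.~6, 8) applied directly to $L=\partial_z(\partial_{\bar z}+\bar a\partial_z)$. Your route is genuinely different in presentation: you first integrate the equation once, observing that $F=(\partial_{\bar z}+\bar a\partial_z)u-\psi$ is annihilated by $\partial_z$ and hence, by Weyl's lemma, equals a smooth anti-holomorphic function $h$, which reduces everything to the first-order Beltrami-type equation $(\partial_{\bar z}+\bar a\partial_z)u=\psi+h$. This reduction buys you a cleaner statement of what regularity theory is actually being invoked (first-order elliptic regularity with $C^{m,\beta}$ coefficients, $0<\beta<1$, which can even be run by hand using the $\partial_z^{-1}:C^{m,\beta}\to C^{m+1,\beta}$ boundedness the paper already quotes), at the cost of one extra observation (the anti-holomorphic integration "constant"); the paper's citation-only approach keeps the divergence structure and leans entirely on the textbook second-order results. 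Both parts of your proof are sound: ellipticity of $\partial_{\bar z}+\bar a\partial_z$ follows from $\|a\|_{C^0}<1$ as you say, part \ref{S1} is standard interior hypoellipticity on $V\Subset U\setminus\{0\}$ where $a$, $\psi$, $h$ are all smooth, and part \ref{S2} correctly loses the endpoint exponent by embedding $C^{k-1,1}\hookrightarrow C^{k-1,1-\eps}$ before applying Schauder with exponent $1-\eps$. Your closing remark about the endpoint failure at $\beta=1$ accurately identifies why the lemma is stated with $1-\eps$ and how it interacts with the logarithmic factor in \eqref{a}.
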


Our Theorem \ref{THM} for 1-dim case is done by the following proposition:

\begin{prop}
For any neighborhood $U\subset\R^2_{z}$ of $0$, there is no $f\in C^{k-1,1}(U;\C)$ solving \eqref{MainEQN}.
\end{prop}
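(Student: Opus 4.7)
My plan is to argue by contradiction with Lemma \ref{lem}. Suppose $f \in C^{k-1,1}(U;\C)$ solves \eqref{MainEQN}. Since $Lf = \partial_z(-\bar a_z)$ with $-\bar a_z$ smooth on $U\setminus\{0\}$ by property \ref{1}, Lemma \ref{Schauder}(a) first yields $f \in C^\infty_{\loc}(U\setminus\{0\})$. Now fix a cutoff $\chi \in C_c^\infty(\frac{1}{2}\B^2)$ with $\supp\chi \subset U$ and $\chi \equiv 1$ near $0$, and conjugate \eqref{MainEQN} to obtain $\bar f_z + a\bar f_{\bar z} = -a_{\bar z}$. Multiplying by $\chi$ and expanding via the Leibniz rule yields
\[v_z + a\,v_{\bar z} = -\chi a_{\bar z} + S, \qquad v := \chi\bar f,\]
where the commutator $S = \chi_z\bar f + a\chi_{\bar z}\bar f$ is supported on $\supp\nabla\chi$; by the just-established smoothness of $f$ on $U\setminus\{0\}$, $S \in C_c^\infty(\B^2;\C)$.

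Because $v$ is compactly supported in $\B^2$ I have $\partial_z^{-1}v_z = v$, so applying $\partial_z^{-1}$ to the last display produces the key identity
\[\partial_z^{-1}(\chi a_{\bar z}) \;=\; -v \;-\; \partial_z^{-1}(a\,v_{\bar z}) \;+\; \partial_z^{-1}S.\]
By hypothesis $v \in C^{k-1,1}$, and $\partial_z^{-1}S$ is smooth near $0$, since the kernel $1/(\pi(\bar z-\bar\zeta))$ is smooth in $z$ for $\zeta$ in the compact set $\supp S \subset \B^2\setminus\{0\}$. The contradiction with Lemma \ref{lem} will follow once the remaining term $\partial_z^{-1}(a\,v_{\bar z})$ is shown to be in $C^{k-1,1}$ on a neighborhood of $0$.

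To prove this last bound I plan to exploit the explicit structure of $a$ from \eqref{a}: near $0$, $a = \frac{1}{100}\partial_z b$ with $b = \bar z^{k+1}(-\log|z|)^{1/2}$, and $a$ vanishes to order $k$ at $0$ by property \ref{3}. Writing $a\,v_{\bar z} = \frac{1}{100}\partial_z(b\,v_{\bar z}) - \frac{1}{100}b\,v_{z\bar z}$ near $0$ and using $\partial_z^{-1}\partial_z = \id$ on compactly supported functions reduces $\partial_z^{-1}(a\,v_{\bar z})$, up to a smooth error, to $\frac{1}{100}b\,v_{\bar z} - \frac{1}{100}\partial_z^{-1}(b\,v_{z\bar z})$. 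The high-order vanishing of $b$ at $0$ is expected to compensate for $v_{\bar z}$ being only $C^{k-2,1}$ and $v_{z\bar z}$ being only a distributional second derivative of $v\in C^{k-1,1}$, placing each piece in $C^{k-1,1}$.

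The main obstacle I anticipate lies exactly here: $\partial_z^{-1}$ fails to be bounded at the Lipschitz endpoint $C^{m,1}$, so a naive H\"older count only yields $C^{k-1,1-\eps}$, which is not enough to contradict Lemma \ref{lem} (whose failure is only by a single logarithm). The argument must extract the endpoint gain from the vanishing structure of $a$ and the specific form of $b$, in the same spirit as the proof of Lemma \ref{lem} isolates the singular piece $\partial_{\bar z}^{k+1}(\bar z^{k+1}(-\log|z|)^{1/2})$. Once this endpoint analysis is in place, the displayed identity forces $\partial_z^{-1}(\chi a_{\bar z}) \in C^{k-1,1}$ near $0$, contradicting Lemma \ref{lem} and establishing the proposition.
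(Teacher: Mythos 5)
Your reduction is sound and is in fact the complex conjugate of the paper's: with $g:=\chi f=\bar v$ the identity you derive is exactly the paper's equation \eqref{FinalEqn}, and the proposition does follow once one shows $\partial_z^{-1}(a v_{\bar z})\in C^{k-1,1}$ near $0$. But that is precisely the heart of the proof, and you have not supplied it — you explicitly defer it ("once this endpoint analysis is in place"). This is a genuine gap, and your proposed route through $a=\frac1{100}\partial_z b$ is unlikely to close it: integrating by parts produces $b\,v_{\bar z}$ and $\partial_z^{-1}(b\,v_{z\bar z})$, and controlling $k-1$ derivatives plus Lipschitz of these requires bounding $\partial^{k}v_{\bar z}$ and $v_{z\bar z}$ near $0$, i.e.\ derivatives of $v$ of order strictly beyond the assumed $C^{k-1,1}$ regularity (for $k=1$, $v_{z\bar z}$ is only a distribution). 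The vanishing of $b$ to order $k+1$ cannot compensate for quantities on which you have no a priori bound; Schauder part \ref{S1} gives smoothness away from $0$ but no rate as $z\to0$. So even the $C^{k-1,1-\eps}$ bound you call "naive" is not actually established by your decomposition.

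The paper's mechanism for this step is different and is what you are missing: it introduces the auxiliary function $h:=\bar z g$, observes that $h$ solves $h_{\bar z}+\bar a h_z=g+(\text{smooth})-\chi\bar z\bar a_z$ whose right-hand side lies in $C^k_c$ \emph{because of Property \ref{3}} ($za\in C^{k+1}$), and applies the second-order Schauder estimate (Lemma \ref{Schauder}\ref{S2}) to conclude $h\in C^{k,1-\eps}$ — a genuine one-derivative gain coming from ellipticity, not from H\"older algebra. Then $\bar a g_z=(\bar z^{-1}\bar a)\,h_z$ with $z^{-1}a\in C^{k-1}$ (Property \ref{3} again) gives $\bar a g_z\in C^{k-1,1-\eps}$, and the endpoint issue you worry about evaporates: one does not need $\partial_{\bar z}^{-1}$ to be bounded at the Lipschitz endpoint, since $\partial_{\bar z}^{-1}(C^{k-1,1-\eps})\subset C^{k,1-\eps}\subset C^{k-1,1}$. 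Note also that the completed argument uses only the structural properties \ref{1}--\ref{5} of $a$, not the explicit formula \eqref{a} (which enters only in Lemma \ref{lem}). To repair your proof you should replace the integration-by-parts step with this $h=\bar z v$ bootstrap, or find a substitute that genuinely upgrades $v$ near the origin before multiplying by $a$.
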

\begin{proof}
Suppose there is a neighborhood $U\subset\frac12\B^2$ of the origin, and a solution $f\in C^{k-1,1}(U;\C)$ to \eqref{MainEQN}.

Applying Lemma \ref{Schauder} \ref{S1} on \eqref{MainEQN} with $u=f$ and $\psi=-\bar a_z$, we know that$f\in C^\infty(U\backslash\{0\};\C)$.

Take $\chi\in C_c^\infty(U,[0,1])$ such that $\chi\equiv1$ in a smaller neighborhood of $0$. Denote $$g(z):=\chi(z) f(z),\qquad h(z):=\bar zg(z).$$ So $g,h$ are $C^{k-1,1}$-functions defined in $\R^2$ that are also \textbf{smooth away from 0}, and satisfy the following:
\begin{equation}\label{EQNg}
    g_{\bar z}+\bar ag_z=\chi_{\bar z}f+\chi_z\bar af-\chi\bar a_z,
\end{equation}
\begin{equation}\label{EQNh}
    \qquad h_{\bar z}+\bar ah_z=g+\bar z(\chi_{\bar z}f+\chi_z\bar af)-\chi \bar z\bar a_z.
\end{equation}

By construction $\nabla\chi\equiv0$ holds in a neighborhood of $0$, so $\chi_{\bar z}f+\chi_z\bar af\in C_c^\infty(U;\C)$.

\medskip
Under our assumption that $f\in C^{k-1,1}(U;\C)$, then the key is to show that 
\begin{enumerate}[parsep=-0.3ex,label=(\Roman*)]
    \item $h\in C^{k,1-\eps}_c(\R^2;\C)$, $\forall \eps\in(0,1)$. This implies:
    \item\label{C2} $\bar ag_z\in C^{k-1,1-\eps}_c(\R^2;\C)$, $\forall \eps\in(0,1)$.
\end{enumerate}

\medskip\noindent\textbf{(I)} By assumption $\bar z(\chi_{\bar z}f+\chi_z\bar af)\in C^\infty_c$, $g\in C^{k-1,1}$, and by Property \ref{3}, $\chi\bar z\bar a_z\in C^k(\R^2;\C)$. 

Applying Lemma \ref{Schauder} \ref{S2} to \eqref{EQNh}, with $u=h$ and $\psi=g+\bar z(\chi_{\bar z}f+\chi_z\bar a f)-\chi\bar z\bar a_z\in C^k_c(\R^2;\C)$, we get $h\in C^{k,1-\eps}(\B^2;\C)$, for all $0<\eps<1$.

\medskip\noindent\textbf{(II)} When $k\ge2$,  we know  $z^{-1}a\in C^{k-1}$ for Property \ref{3}. So for any $\eps\in(0,1)$, one has $\bar ag_z\in C^{k-1,1-\eps}$ because $$\nabla_{z,\bar z}(\bar ag_z)=\bar a\cdot(\partial_zg_z,\partial_{\bar z}g_z)+g_z\nabla_{z,\bar z}\bar a=\bar z^{-1}\bar a\cdot(h_{zz},h_{z\bar z}-g_z)+O(C^{k-2,1})\in C^{k-2,1-\eps}.$$

When $k=1$, for any $z_1,z_2\in\R^2\backslash\{0\}$, note that $g_{\bar z}$ is smooth outside the origin, so $\bar ag_z\in C^{0,1-\eps}$:
    \begin{align*}
    &|\bar ag_z(z_1)-\bar ag_z(z_2)|\le|\bar a(z_1)||g_z(z_1)-g_z(z_2)|+|\bar a(z_1)-\bar a(z_2)||g_z(z_2)|
    \\
    \le&|\bar z_1^{-1}\bar a(z_1)||\bar z_1g_z(z_1)-\bar z_2g_z(z_2)|+|\bar z_1^{-1}\bar a(z_1)||\bar z_1-\bar z_2||g_z(z_2)|+|\bar a(z_1)-\bar a(z_2)||g_z(z_2)|
    \\
    \le&\|z^{-1}a\|_{C^0}\|\nabla h\|_{C^{1-\eps}}|z_1-z_2|^{1-\eps}+\|z^{-1}a\|_{C^0}\|g\|_{C^{0,1}}|z_1-z_2|+\|a\|_{C^1}\|g\|_{C^{0,1}}|z_1-z_2|.
\end{align*}

\textit{Here as a remark, $|\bar ag_z(z_1)-\bar ag_z(z_2)|\lesssim_{a,g,h}|z_1-z_2|^{1-\eps}$ still makes sense when $z_1$ or $z_2=0$, though $\bar g_z(0)$ may not be defined. Indeed $\lim\limits_{z\to0}\bar ag_z(z)$ exists because $\bar ag_z$ itself has bounded $C^{0,1-\eps}$-oscillation on $\B^2\backslash\{0\}$, and then the limit defines the value of $\bar ag_z$ at $z=0$.}

So for either case of $k$, we have $\bar ag_z\in C^{k-1,1-\eps}(\R^2;\C)$ for all $\eps\in(0,1)$.

\medskip 
Based on consequence \ref{C2}, for \eqref{EQNg} we have 
\begin{equation}\label{FinalEqn}
    g=(g-\partial_{\bar z}^{-1}g_{\bar z})+\partial_{\bar z}^{-1}(\chi_{\bar z}f+\chi_z\bar af)-\partial_{\bar z}^{-1}(\bar ag_z)-\partial_{\bar z}^{-1}(\chi\bar a_z),\qquad\text{in }\B^2.
\end{equation}

The right hand side of \eqref{FinalEqn} consists of four terms, the first to the third are all $C^k$, while the last one is not $C^{k-1,1}$. We explain these as follows:
\begin{itemize}[parsep=-0.3ex]
    \item Since $\partial_z(g-\partial_{\bar z}^{-1}g_{\bar z})=0$ in $\B^2$, we know $g-\partial_{\bar z}^{-1}g_{\bar z}$ is anti-holomorphic, which is smooth in $\B^2$.
    \item By assumption $\chi_{\bar z}f+\chi_z\bar af\in C^\infty_c(\R^2;\C)$, so $\partial_{\bar z}^{-1}(\chi_{\bar z}f+\chi_z\bar af)\in C^\infty(\B^2;\C)$ as well.
    \item By consequence \ref{C2} $\bar ag_z\in C^{k-1,1-\eps}(\R^2;\C)$, for all $\eps\in(0,1)$, so $\partial_{\bar z}^{-1}(\bar ag_z)\in  C^{k,1-\eps}\subset C^k(\B^2;\C)$.
    \item However by Lemma \ref{lem}, $\partial_{\bar z}^{-1}(\chi\bar a_z)\notin C^{k-1,1}$ near 0.
\end{itemize}

Combining each term to the right hand side of \eqref{FinalEqn}, we know $g\notin C^{k-1,1}$ near 0. Contradiction!
\end{proof}

\begin{remark}
The key to the proof is the non-surjectivity of $\partial_z:C^k\to C^{k-1}$, which we use to  construct a function $a(z)$ such that $a(0)=0$, $\operatorname{Singsupp}a=\{0\}$, and $\partial_z^{-1}\partial_{\bar z}a\notin C^k$. 
\end{remark}
\begin{remark}
For positive integer $k$, Malgrange's sharp estimate of Theorem \ref{SharpNN} still holds for Zygmund spaces $\Co^k=B^k_{\infty\infty}$, that is, given $J\in\Co^k$, there exists a $\Co^{k+1}$-coordinate chart $(w^1,\dots,w^n)$, such that $J\Coorvec{w^j}=i\Coorvec{w^j}$. One can also see \cite{SharpElliptic} for details. A reason why our proof does not give a counterexample for Zygmund spaces, is that there does exist a $f\in \Co^k$ defined in a neighborhood of 0 that solves \eqref{MainEQN}.
\end{remark}

\subsection*{Acknowledgement}
The author would like to express his appreciation to his advisor Prof. Brian T. Street for his help.

\bibliographystyle{plain}
\bibliography{Bibliography}

\center{\small\textit{University of Wisconsin-Madison, Department of Mathematics, 480 Lincoln Dr.,\\ Madison, WI, 53706}\\\textit{lyao26@wisc.edu}}

\center{MSC 2010: 35J46 (Primary), 32Q60 and 35F05 (Secondary)}
\end{document}